\documentclass[12pt]{amsart}
 
\usepackage {amsfonts, amssymb, amscd, amsthm, amsmath, xypic}
\usepackage[dvips]{graphicx}
\usepackage{geometry}
\geometry{left=30mm}
\geometry{right=30mm}
\geometry{top=2.5cm}
\geometry{bottom=2.3cm}

\newtheorem{Thm}{Theorem}[section]
\newtheorem{theorem}[Thm]{Theorem}

\newtheorem{proposition}[Thm]{Proposition}

\newtheorem{remark}[Thm]{Remark}

%\cite[Theorem ?]{gr1}}

\renewcommand{\phi}{\varphi}
\newcommand{\co}{\mathbb{C}}

\newcommand{\D}{\mathbb{D}}
\newcommand{\Dh}{\widehat{\mathbb{D}}}
\newcommand{\T}{\mathbb{T}}
\newcommand{\tz}{\mathbb{T}}

\newcommand{\rea}{{\rm Re}\,}

\title{Hypercyclic Toeplitz operators}
\author{Anton Baranov, Andrei Lishanskii}

\address{ 
 Anton Baranov, 
\newline
Department of Mathematics and Mechanics,
St. Petersburg State University, 
\newline
St. Petersburg, Russia,
\newline
\phantom{x}\,\, and
\newline
National Research University  Higher School of Economics,
\newline
St. Petersburg, Russia,
\newline {\tt anton.d.baranov@gmail.com}
\newline\newline \phantom{x}\,\, Andrei Lishanskii,
\newline 
Department of Mathematics and Mechanics,
St. Petersburg State University, 
\newline 
St. Petersburg, Russia,
\newline
\phantom{x}\,\, and
\newline 
Chebyshev Laboratory,
St. Petersburg State University,
\newline 
St. Petersburg, Russia,
\newline {\tt lishanskiyaa@gmail.com}
\newline\newline \phantom{x}
}

\thanks{The authors were supported by the grant MD-5758.2015.1. 
The second author was partially supported by JSC "Gazprom Neft" and 
by RFBR grant 14-01-31163.}

\keywords{hypercyclic operator, Toeplitz operator, univalent function}

\subjclass{47A16, 47B35, 30H10}

\begin{document}

\begin{abstract}
We study hypercyclicity of the Toeplitz operators in the Hardy space 
$H^2(\mathbb{D})$ 
with symbols of the form $p(\overline{z}) +\phi(z)$, where $p$ is a polynomial
and $\phi \in H^\infty(\mathbb{D})$. 
We find both necessary and sufficient conditions for hypercyclicity
which almost coincide in the case when ${\rm deg}\, p =1$.
\end{abstract}

\maketitle

\sloppy
%\baselinestretched

%\large
%\normalsize

\section{Introduction and Main Results}

Toeplitz operators with antianalytic symbols are among the
basic examples of hypercyclic operators. In 1968, 
S. Rolewicz showed that the operator $T_{\alpha\overline{z}}$ 
(a multiple of the backward shift) is hypercyclic on the Hardy space $H^2$ 
whenever $|\alpha|>1$. Later, G. Godefroy and J. Shapiro \cite{gosh} 
showed that for a function 
$\phi \in H^\infty$ the antianalytic Toeplitz operator $T_{\overline{\phi}}$
is hypercyclic if and only if $\phi(\mathbb{D}) \cap \mathbb{T} \ne \emptyset$.
Here, as usual, $\mathbb{D}$ and $\mathbb{T}$ denote the unit disc and 
the unit circle, respectively. On the other hand, it is obvious that there are no 
hypercyclic Toeplitz operators with analytic symbols
(i.e., among multiplication operators). 

However, it seems that hypercyclicity phenomena for general
Toeplitz operators are much less studied, and the hypercyclicity criteria 
are not known. This problem was explicitly stated by Shkarin \cite{sh} who described
hypercyclic Toeplitz operators with symbols of the form $\Phi(z) =a\overline{z} +b +cz$
(i.e., with tridiagonal matrix).

The aim of this note is to give new examples of hypercyclic Toeplitz operators.
We give necessary or sufficient conditions 
for hypercyclicity of $T_\Phi$ in the case when 
\begin{equation}
\label{fg}
\Phi(z) = p\bigg(\frac{1}{z}\bigg) +\phi(z),
\end{equation}
where $p$ is a polynomial and $\phi$ is in $H^\infty$ 
(sometimes we will assume that $\phi$ belongs to 
the disc-algebra $A(\mathbb{D})$). 
In the case when $p(z) = \gamma z$ (i.e., $\Phi \in \overline{z} H^\infty$) 
the gap between the necessary and sufficient 
conditions becomes especially small. 

A novel feature of these conditions is the role of univalence or $N$-valence
(where $N$ is the degree of $p$) of the symbol. 
It seems that such conditions did not appear
in the linear dynamics before, with one notable exception: in \cite{bs} 
Bourdon and Shapiro studied Bergman space Toeplitz operators with {\it antianalytic} symbols
and in some of their results the univalence of the symbol plays a role.
\bigskip

Let us state the main results of the paper. 
In what follows we denote by $\overline{\D}$ the closed unit disc and
put $\widehat{\D} = \co\setminus \overline{\D}$.

Our first result applies to the case when the antianalytic part has degree 1.

\begin{theorem}
\label{main}
Let $\gamma\in \co$, let $\phi \in H^\infty$
and let $\Phi(z) = \frac{\gamma}{z} + \phi(z)$.
\medskip
\\
1. If $T_{\Phi}$ is hypercyclic, then 

$(a)$ the function $\Phi$ is univalent in $\D \setminus \{0\}$;

$(b)$ $\overline{\D} \cap (\co \setminus \Phi(\D)) \ne \emptyset$ and 
$\Dh \cap (\co \setminus \Phi(\D)) \ne \emptyset$.
\medskip
\\
2. Assume that $\phi \in A(\overline{\D})$ and that

$(a')$ the function $\Phi$ is univalent in $\overline{\D}\setminus\{0\}$;

$(b')$ $\D \cap (\co \setminus \Phi(\D)) \ne \emptyset$ and 
$\Dh \cap (\co \setminus \Phi(\D)) \ne \emptyset$.

Then $T_{\Phi}$ is hypercyclic.
\end{theorem}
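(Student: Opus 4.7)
Throughout I apply the Godefroy--Shapiro hypercyclicity criterion via explicit eigenvectors, together with basic spectral and Fredholm properties of Toeplitz operators.

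\emph{Sufficiency.} The eigenvalue equation $T_\Phi f = \lambda f$, after expanding $T_\Phi f = \gamma(f - f(0))/z + \phi f$, reduces to $(\gamma + z(\phi(z) - \lambda))\, f(z) = \gamma f(0)$, so up to a scalar the candidate eigenvector is $f_\lambda(z) = 1/(\gamma + z(\phi(z) - \lambda))$, which lies in $A(\overline{\D}) \subset H^2$ precisely when its denominator does not vanish on $\overline{\D}$; equivalently, when $\lambda \notin \Phi(\overline{\D} \setminus \{0\})$. Under $(a')$, $\Phi$ is a continuous injection on $\overline{\D} \setminus \{0\}$ with a simple pole at the origin, so $\Phi(\T)$ is a Jordan curve and $\Phi(\D)$ is its unbounded complementary component; let $\Omega$ denote the bounded component. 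Then every $\lambda \in \Omega$ is an eigenvalue of $T_\Phi$ with eigenvector $f_\lambda \in A(\overline{\D})$, and $(b')$ guarantees that $\Omega \cap \D$ and $\Omega \cap \Dh$ contain nonempty open sets $U_-$ and $U_+$. Once density of $\mathrm{span}\{f_\lambda : \lambda \in U_\pm\}$ is established, the Godefroy--Shapiro criterion applies with $X = \mathrm{span}\{f_\lambda : \lambda \in U_-\}$, $Y = \mathrm{span}\{f_\lambda : \lambda \in U_+\}$, and the formal right-inverse $S$ on $Y$ defined by $S f_\lambda = \lambda^{-1} f_\lambda$: $T_\Phi^n \to 0$ on $X$ and $S^n \to 0$ on $Y$, so $T_\Phi$ is hypercyclic.

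\emph{Density (the key step).} For any nonempty open $U \subset \Omega$, I claim $\mathrm{span}\{f_\lambda : \lambda \in U\}$ is dense in $H^2$. Suppose $h \in H^2$ is orthogonal to all such $f_\lambda$. Since $f_\lambda$ is holomorphic in $\lambda$, the function $F(\lambda) = \langle h, f_\lambda \rangle$ is antiholomorphic on $\Omega$, and connectedness of $\Omega$ extends vanishing from $U$ to all of $\Omega$. A direct computation on $\T$ (using $\bar z = 1/z$ and $\overline{\Phi(z)} = \bar\gamma z + \overline{\phi(z)}$) yields $\overline{f_\lambda(z)} = z/(\overline{\Phi(z)} - \overline{\lambda})$, so with $\mu = \overline{\lambda}$ the orthogonality rewrites as
\[
G(\mu) \,=\, \int_{\T} \frac{z\, h(z)}{\overline{\Phi(z)} - \mu}\, dm(z) \,\equiv\, 0, \qquad \mu \in \overline{\Omega}.
\]
Since $\overline{\Phi}|_\T$ is a homeomorphism onto the Jordan curve $\overline{\Phi(\T)}$, $G$ is a Cauchy-type integral on that curve, holomorphic off it, with $G(\infty) = 0$. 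Plemelj--Sokhotski jump relations, combined with the Laurent expansion $\overline{\Phi(z)} = \bar\gamma z + \sum_{k \ge 0} \bar a_k\, z^{-k}$ (which extends $\overline{\Phi}$ analytically from $\T$ into $\Dh$), force the density $z h(z)$ to extend analytically from $\T$ into $\Dh$ as a function of $z$; but $zh \in zH^2$ has only positive Fourier modes, so such an extension forces $zh = 0$ and hence $h = 0$.

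\emph{Necessity.} A hypercyclic operator $T$ on a separable Banach space has (i) $T^*$ without eigenvalues and (ii) each component of $\sigma(T)$ meeting $\T$. For $(a)$, a direct computation with reproducing kernels $k_w(z) = 1/(1 - \bar w z)$, using $z k_w = (k_w - 1)/\bar w$, gives
\[
T_\Phi^* k_w \,=\, \overline{\Phi(w)}\, k_w \,-\, \bar\gamma/\bar w;
\]
if $\Phi(w_1) = \Phi(w_2) = \lambda_0$ for distinct $w_1, w_2 \in \D \setminus \{0\}$, then $g = \bar{w_1}\, k_{w_1} - \bar{w_2}\, k_{w_2}$ is a nonzero eigenvector of $T_\Phi^*$ at $\overline{\lambda_0}$, contradicting (i). For $(b)$, Fredholm analysis together with the explicit eigenvector identifies $\sigma(T_\Phi) = \co \setminus \Phi(\D)$; if $\overline{\D} \subset \Phi(\D)$ then $\sigma(T_\Phi) \subset \Dh$, so $T_\Phi$ is invertible with $\sigma(T_\Phi^{-1}) \subset \D$, whence $\|T_\Phi^{-n}\| \to 0$, contradicting the hypercyclicity of $T_\Phi^{-1}$ (which inherits from $T_\Phi$); a parallel argument using (ii) handles the case $\Dh \subset \Phi(\D)$. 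The main obstacle is the density step: translating the orthogonality into a vanishing Cauchy integral on $\overline{\Phi(\T)}$ and inverting it via Plemelj--Sokhotski crucially uses the univalence of $\Phi$ and the Laurent extension of $\overline{\Phi}$ into $\Dh$.
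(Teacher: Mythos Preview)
Your overall framework---Godefroy--Shapiro via the explicit eigenvectors $f_\lambda=(\gamma+z(\phi-\lambda))^{-1}$, and necessity of $(a)$ via an eigenvector of $T_\Phi^*$ built from Cauchy kernels---coincides with the paper's. The divergence, and the real gap, is in the density step.

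\textbf{The density argument.} From $\langle h,f_\lambda\rangle=0$ for $\lambda$ in an open subset of $\Omega$ you correctly get $G(\mu)=\frac{1}{2\pi i}\int_{\T}\frac{h(z)}{\Psi(z)-\mu}\,dz\equiv 0$ on the bounded component of $\co\setminus\Gamma$, where $\Psi(z)=\overline{\Phi(z)}$ on $\T$ and $\Gamma=\Psi(\T)$. But Plemelj--Sokhotski does not yield that ``$zh$ extends analytically to $\Dh$''. After the change of variable $w=\Psi(z)$ the density in the Cauchy integral over $\Gamma$ is $h(\Psi^{-1}(w))\,(\Psi^{-1})'(w)$, not $zh$; the jump formula then only says that this density equals the boundary value of $G$ from the \emph{exterior} side, which is not known to vanish. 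Even to carry out this much you need $\Psi'$ on $\T$ and a rectifiable (or H\"older) Jordan curve $\Gamma$, none of which is available when $\phi$ is merely in $A(\overline{\D})$; and $h$ has only $L^2$ boundary values, so the classical Plemelj theorem does not apply. The paper avoids all of this: fixing $\lambda_0\in\Omega$ and setting $g(z)=(\Phi(z)-\lambda_0)^{-1}$, condition $(a')$ makes $g\in A(\D)$ univalent on $\overline{\D}$, so $g(\D)$ is a Jordan domain and Mergelyan's theorem gives that $\{g^k\}_{k\ge 0}$ is complete in $H^2$. Expanding $f_\lambda$ in powers of $\lambda-\lambda_0$ shows that orthogonality to $\{f_\lambda:\lambda\text{ near }\lambda_0\}$ is equivalent to orthogonality to $\{g\cdot g^k:k\ge 0\}$; since $g$ is invertible in $H^\infty$, this forces $h=0$.

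\textbf{A smaller issue in necessity.} Your ``parallel argument using (ii)'' for $\Dh\cap(\co\setminus\Phi(\D))\ne\emptyset$ does not work: $\Dh\subset\Phi(\D)$ only yields $\sigma(T_\Phi)\subset\overline{\D}$, which is compatible with every spectral component meeting $\T$, and there is no reason for the spectral radius to be strictly less than $1$. The paper argues instead that $\Dh\subset\Phi(\D)$ together with the already-established univalence forces $|\Phi|\le 1$ a.e.\ on $\T$, hence $\|T_\Phi\|\le 1$, contradicting hypercyclicity.
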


The gap between the necessary and sufficient conditions is related only
to the boundary behaviour of $\Phi$. While it is necessary that $\Phi$ 
is univalent in $\D$, we ask for univalence up to the boundary in the 
sufficient condition. Also, while the necessary condition requires the spectrum
$\sigma(T_\Phi) = \co\setminus \Phi(\D)$ 
to intersect the unit circle, in the sufficient condition we need a stronger
assumption that the set $\co\setminus \overline{\Phi(\D)}$ (which is, essentially,
the point spectrum of $T_\Phi$) intersects the open disc $\D$.
\medskip

In our second result $p$ is a polynomial of degree $N$. Recall that an 
analytic function $h$ in the domain
$D$ is said to be $N$-valent in $D$ if the equation $h(z) = w$ has at most $N$ solutions
in $D$ counting multiplicities. 
Note that $\Phi(z) \sim c_N z^{-N}$, $z\to 0$, and so $\Phi(z) = w$ has exactly 
$N$ solutions when $|w|$ is sufficiently large. Put
$$
\Phi(\D, N) = \{w\in \co: \text{equation } \Phi(z) = w
\text{ has exactly } N \text{ solutions in }\D \},
$$
where the solutions are counted according to their multiplicities.

\begin{theorem}
\label{main1}
Let $p$ be a polynomial of degree $N\ge 1$, let $\phi \in H^\infty$
and let $\Phi$ be given by \eqref{fg}.
\medskip
\\
1. If $T_{\Phi}$ is hypercyclic, then 

$(a)$ the function $\Phi$ is $N$-valent in $\D\setminus\{0\}$;

$(b)$ $\overline{\D} \cap (\co \setminus \Phi(\D, N)) \ne \emptyset$ and 
$\Dh \cap (\co \setminus \Phi(\D, N)) \ne \emptyset$.
\medskip
\\
2. Assume that $\phi \in A(\overline{\D})$ and that  

$(a')$ for any $w \in 
\Phi(\overline{\D}\setminus\{0\})$ the equation $\Phi(z) = w$
has exactly $N$ solutions in $\overline{\D}\setminus\{0\}$;

$(b')$ $\D \cap (\co \setminus \Phi(\D))$ and
$\Dh \cap (\co\setminus\Phi(\D)) \ne \emptyset$. 

Then $T_{\Phi}$ is hypercyclic.
\end{theorem}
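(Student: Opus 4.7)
The plan is to follow the blueprint of the proof of Theorem~\ref{main}, upgrading univalence to $N$-valence and replacing one-parameter eigenvector families by $N$-parameter ones. The algebraic backbone I will use is the identity
\[
T_{\Phi} = B^{N} M_{\Psi}, \qquad \Psi(z) := z^{N}\Phi(z) = z^{N} p(1/z) + z^{N}\phi(z),
\]
where $B$ is the backward shift on $H^{2}$. Here $\Psi \in H^{\infty}$ with $\Psi(0) = c_{N} \ne 0$ (the leading coefficient of $p$), and the identity follows from $P_{+}(z^{-N}h) = B^{N}h$ for $h \in H^{2}$. Consequently $T_{\Phi} - w I = B^{N} M_{\Psi_{w}}$ with $\Psi_{w}(z) := \Psi(z) - w z^{N}$, and the zeros of $\Psi_{w}$ in $\D$, counted with multiplicity, correspond bijectively to the solutions of $\Phi(z) = w$ in $\D \setminus \{0\}$.

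For part 1 (necessity), I would use the standard fact that a hypercyclic operator has no eigenvalues in its adjoint point spectrum; equivalently, $T_{\Phi} - w I$ must have dense range for every $w$. Its range equals $B^{N}(\Psi_{w}H^{2})$; if $\Psi_{w}$ had more than $N$ zeros in $\D$ counted with multiplicity, the inner factor of $\Psi_{w}$ would force $\overline{\Psi_{w}H^{2}}$ to have codimension strictly greater than $N$ in $H^{2}$, and since $\ker B^{N}$ is only $N$-dimensional, the closure of $B^{N}(\Psi_{w}H^{2})$ would still be a proper subspace, contradicting hypercyclicity. This gives (a). For (b), I would combine (a) with the Fredholm-index formula $\mathrm{ind}(T_{\Phi} - w) = N - M_{w}$ valid for $w \notin \Phi(\T)$ (with $M_w$ the number of preimages of $w$ in $\D$), together with the general fact that every connected component of $\sigma(T_{\Phi})$ meets $\T$: under (a), the set $\co \setminus \Phi(\D,N)$ is contained in $\sigma(T_{\Phi})$ (either via the essential spectrum $\Phi(\T)$ or via positive Fredholm index and hence nontrivial kernel), so the intersection conditions on $\overline{\D}$ and $\Dh$ follow.

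For part 2 (sufficiency), I would verify the Godefroy--Shapiro criterion. Solving $T_{\Phi}f = \lambda f$ via the factorization gives $B^{N}(\Psi_{\lambda}f) = 0$, so $\Psi_{\lambda}f$ must be a polynomial $Q$ of degree less than $N$, yielding $f = Q/\Psi_{\lambda}$. Under (a') and (b'), the set $U_{1} := \D \cap (\co \setminus \Phi(\overline{\D}\setminus\{0\}))$ is open and nonempty: $\D \cap (\co \setminus \Phi(\D))$ is open and nonempty by (b'), and $\Phi(\T)$ has empty interior, so removing it still leaves an open subset of $\D$. On $U_{1}$, $\Psi_{\lambda}$ is continuous on $\overline{\D}$ and nonvanishing there, so the $N$ functions $f^{(j)}_{\lambda}(z) := z^{j}/\Psi_{\lambda}(z)$ for $0 \le j \le N-1$ are eigenvectors in the disc algebra with eigenvalue $\lambda$; an analogous $U_{2} \subset \Dh$ is obtained the same way. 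Density of the linear span of the $f^{(j)}_{\lambda}$ in $H^{2}$ follows by differentiating in $\lambda$: since $\partial_{\lambda}\Psi_{\lambda} = -z^{N}$, one obtains $\partial^{k}_{\lambda}(z^{j}/\Psi_{\lambda}) = k!\,z^{kN+j}/\Psi_{\lambda}^{k+1}$, and these derivatives lie in the closed span because the map $\lambda \mapsto f^{(j)}_{\lambda}$ is analytic from $U_{i}$ into $H^{2}$. Evaluating at any $\lambda_{0} \in U_{i}$ and expanding $1/\Psi_{\lambda_{0}} = 1/c_{N} + O(z)$ near the origin, the leading monomials $z^{kN+j}/c_{N}^{k+1}$ realize every power $z^{n}$ as $k$ and $j$ range, so the closed span contains all polynomials and is therefore all of $H^{2}$.

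The hard part will be the boundary control in part 2: ensuring $\Psi_{\lambda}$ is genuinely nonvanishing on the closed disc $\overline{\D}$ for $\lambda$ in a nonempty open subset of each $U_{i}$. This is precisely where the strengthened form of $N$-valence in (a'), enforced up to the boundary and demanding exactly $N$ preimages for every attained value, compensates for requiring only $\phi \in A(\overline{\D})$ and enters as the most delicate point of the argument.
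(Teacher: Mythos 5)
The decisive step of your sufficiency argument --- density of the span of the eigenvectors --- is where the proposal breaks down. After differentiating in $\lambda$ you correctly reduce matters to showing that the functions $z^{kN+j}/\Psi_{\lambda_0}^{k+1}$ ($k\ge 0$, $0\le j\le N-1$) span a dense subspace of $H^2$; these are exactly the functions $z^j h^k/\Psi_{\lambda_0}$ with $h=1/(\Phi-\lambda_0)$ appearing in \eqref{bab3}. But your justification --- that the leading Taylor monomials $z^{kN+j}/c_N^{k+1}$ realize every power $z^n$, hence the closed span contains all polynomials --- is a fallacy. Having, for each $n$, an element of the span whose Taylor expansion begins with a nonzero multiple of $z^n$ does not give density: the system $\{z^n u\}_{n\ge 0}$ with $u$ inner and $u(0)\ne 0$ has precisely this triangular structure, yet its closed span is $uH^2\ne H^2$. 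Inverting the triangular system would require infinite linear combinations with no control on convergence. Tellingly, your density argument never uses hypothesis $(a')$; if it were valid, $(b')$ alone would force hypercyclicity, contradicting the necessity of $N$-valence proved in Statement 1. The paper closes exactly this gap with Proposition \ref{pr1}: the exact $N$-valence of $\Phi$ (hence of $h$) up to the boundary yields the decomposition $f=\sum_{j=0}^{N-1}z^j f_j(h(z))$ with $f_j$ analytic on $\Omega=h(\D)$ and continuous up to $\partial\Omega$, and Mergelyan's theorem (applicable because $\co\setminus\overline{\Omega}$ is connected) then gives completeness of $\{z^j h^k\}$; the extra factor $1/\Psi_{\lambda_0}$ is harmless because it is invertible in $H^\infty$. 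Some function-theoretic input of this kind is unavoidable here, and it is the actual content of the theorem.

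On the necessity part, your factorization $T_\Phi-wI=B^N M_{\Psi_w}$ is a legitimate alternative to the paper's explicit construction of an adjoint eigenvector as a combination of Cauchy kernels $\sum_j\alpha_j k_{z_j}$, and the non-density of the range when $\Psi_w$ has more than $N$ zeros can be made rigorous. However, in $(b)$ you cannot deduce $\Dh\cap(\co\setminus\Phi(\D,N))\ne\emptyset$ from the fact that every connected component of the spectrum of a hypercyclic operator meets $\T$: that fact only rules out components lying entirely off the circle and is perfectly consistent with $\sigma(T_\Phi)\subset\overline{\D}$ (e.g.\ $\sigma(T_\Phi)=\T$). The paper's argument for this half is different and is needed: if $\Dh\subset\Phi(\D,N)$, then every nontangential boundary value of $\Phi$ has modulus at most $1$ (otherwise some $w$ near such a value would have at least $N+1$ preimages), so $\|T_\Phi\|\le\|\Phi\|_{L^\infty(\T)}\le 1$, and a contraction is never hypercyclic. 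Also, the Fredholm-index formula you invoke for symbols in $H^\infty+C$ requires care when $\phi$ is merely in $H^\infty$; the paper avoids this machinery by a direct analysis of the resolvent equation and the Nevanlinna factorization of $\Psi_\lambda$ in Proposition \ref{sob}.
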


Note that condition $(a')$ implies, in particular, that
$\Phi(\mathbb{D}) = \Phi(\mathbb{D}, N)$.
\medskip

The proofs of Theorems \ref{main} and \ref{main1} are essentially
elementary (modulo some basic results about polynomial 
approximation, like Mergelyan's theorem).
\bigskip

\section{Preliminaries}

Recall that a continuous linear operator $T$ in a separable Banach (or Fr\'echet) space 
$X$  is said to be  \textit{hypercyclic} if 
there exists $x \in X$ such that the set 
$\{T^n x, n\in\mathbb{N}_0\}$ is dense in $X$
(here $\mathbb{N}_0 = \{0,1,2, \dots\}$). 

One of the most basic sufficient conditions of hypercyclicity is the so-called 
Godefroy--Shapiro criterion (see \cite{gosh} or \cite{bm, gp}). 
Suppose that, for a continuous linear operator $T$, the subspaces 
$$
\begin{aligned}
X_0 & = {\rm span}\{ x \in X: Tx = \lambda x \text{ for some }
\lambda \in \mathbb{C}, \ |\lambda| < 1 \}, \\
Y_0 & = {\rm span}\{ x \in X: Tx = \lambda 
x \text{ for some } \lambda \in \mathbb{C}, \ |\lambda| > 1 \},
\end{aligned}
$$
are dense in $X$. Then $T$ is hypercyclic.

Let $H^2$ denote the standard Hardy space in $\mathbb{D}$.
Recall that for a function $\psi \in L^\infty(\mathbb{T})$ the Toeplitz operator
$T_\psi$ with the symbol $\psi$ is defined as 
$T_\psi f = P_+(\psi f)$, where $P_+$ stands for the orthogonal projection from
$L^2(\mathbb{T})$ onto $H^2$. 

In this section we always assume that 
$T_\Phi$ is the Toeplitz operator with the symbol \eqref{fg}, where $p$ 
is a polynomial of degree $N\ge 1$ and $\phi \in H^\infty$. 
Without loss of generality we assume that $p(0) = 0$. 

First we show that $N$-valence of $\Phi$ in $\D$ is necessary for hypercyclicity.

\begin{proposition}
\label{kai}
Assume that for some $\mu\in \co$, the equation 
$\Phi(z) = \mu$ has at least $N+1$ solutions counting multiplicities.
Then $(T_\Phi)^* = T_{\overline{\Phi}}$ 
has an eigenvector and, in particular, $T_\Phi$ is not hypercyclic.
\end{proposition}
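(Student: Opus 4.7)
The strategy is to exhibit a nontrivial eigenvector of $T_{\bar\Phi} = (T_\Phi)^*$, after which the failure of hypercyclicity is standard: if $(T_\Phi)^* g = \lambda g$ with $g\neq 0$, then $\langle T_\Phi^n x, g\rangle = \bar\lambda^n \langle x, g\rangle$ for every $x\in H^2$, which forces the orbit $\{T_\Phi^n x\}$ into a proper hyperplane/cone and prevents density. The eigenvector will be built out of the $N+1$ preimages of $\mu$ via reproducing kernels.

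The first step is the key identity
\begin{equation*}
T_{\bar\Phi}\, k_w = \overline{\Phi(w)}\, k_w + q_w,
\end{equation*}
valid for $w\in\D\setminus\{0\}$, where $k_w(z)=(1-\bar w z)^{-1}$ is the reproducing kernel of $H^2$ and $q_w$ is a polynomial in $z$ of degree at most $N-1$. I would obtain this by splitting $T_{\bar\Phi}=T_{\tilde p}+T_{\bar\phi}$, where $\tilde p$ is the polynomial whose coefficients are the complex conjugates of those of $p$ (so that $\overline{p(\bar z)}=\tilde p(z)$ on $\T$). The coanalytic part gives $T_{\bar\phi} k_w=\overline{\phi(w)}\, k_w$ by the well known action of coanalytic Toeplitz operators on reproducing kernels, while for the analytic part one performs the Euclidean division $\tilde p(z)=(1-\bar w z)q_w(z)+\tilde p(1/\bar w)$, so that $T_{\tilde p}k_w=\tilde p\cdot k_w=q_w+\tilde p(1/\bar w)k_w$. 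Since $\tilde p(1/\bar w)=\overline{p(1/w)}$, the constants combine into $\overline{\Phi(w)}$.

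To accommodate multiplicities I would then differentiate the identity in $\bar w$. Both sides depend holomorphically on $\bar w\in\D\setminus\{0\}$, and the higher order kernels $K_{w,j}(z):=\partial_{\bar w}^j k_w(z)=j!\,z^j/(1-\bar w z)^{j+1}$ belong to $H^2$. If $z_0\in\D\setminus\{0\}$ is a zero of $\Phi(z)-\mu$ of multiplicity $m$, then $\overline{\Phi(w)}-\bar\mu$ has a zero of order $m$ in $\bar w-\bar z_0$, so Leibniz's formula shows that, for $j=0,1,\dots,m-1$,
\begin{equation*}
T_{\bar\Phi}\, K_{z_0,j} = \bar\mu\, K_{z_0,j} + R_{z_0,j},
\end{equation*}
where $R_{z_0,j}$ is again a polynomial in $z$ of degree at most $N-1$. (This bookkeeping with the antiholomorphic dependence on $w$ is the one spot I expect to demand care; the rest of the argument is algebraic.)

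The hypothesis provides at least $N+1$ such vectors $f_1,\dots,f_{N+1}$ obtained from the preimages of $\mu$ in $\D\setminus\{0\}$ with multiplicity, satisfying $T_{\bar\Phi} f_i=\bar\mu f_i+r_i$ with each $r_i$ a polynomial of degree at most $N-1$. The space of such polynomials has dimension $N$, so the map $(c_1,\dots,c_{N+1})\mapsto \sum_i c_i r_i$ has a nontrivial kernel; any element of that kernel yields $T_{\bar\Phi}f=\bar\mu f$ for $f=\sum c_i f_i$. The functions $K_{z_i,j}$ (reproducing kernels and their $\bar w$-derivatives at the distinct preimages) are linearly independent in $H^2$ by a standard argument, so $f\neq 0$, completing the proof.
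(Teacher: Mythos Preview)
Your proof is correct and follows essentially the same approach as the paper: build an eigenvector of $T_{\bar\Phi}$ as a linear combination of reproducing kernels at the $N+1$ preimages of $\mu$, observe that the ``error terms'' are polynomials of degree at most $N-1$, and use a dimension count to kill them. Your treatment of multiplicities via $\partial_{\bar w}$-differentiation of the identity $T_{\bar\Phi}k_w=\overline{\Phi(w)}k_w+q_w$ is equivalent to (and more explicit than) the paper's brief remark that one should use combinations of $(1-\bar z_j z)^{-l}$, $1\le l\le m_j$, and you also supply the linear-independence check ensuring $f\neq 0$, which the paper leaves implicit.
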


\begin{proof}
Denote by $k_\lambda$ the Cauchy kernel (reproducing kernel of $H^2$):
$k_\lambda(z) = \frac{1}{1-\bar \lambda z}$. It is well known that for any 
antianalytic Toeplitz operator we have $T_{\overline{\phi}} k_\lambda = 
\overline{\phi(\lambda)} k_\lambda$. 

Assume, for simplicity, that the equation  $\Phi(z) = \mu$ has $N+1$
distinct solutions $z_1, z_2, \dots z_{N+1}$ in $\D$. 
We will construct
the eigenvector of $(T_\Phi)^*$ in the form $f = \sum_{j=1}^{N+1}\alpha_j k_{z_j}$,
where $\alpha_j$ are some complex coefficients. If $p(z) = \sum_{k=1}^N c_k z^k$, then
$T_{\overline{\Phi}} = T_{\bar p(z) +\overline{\phi(z)}}$, where
$\bar p(z) = \sum_{k=1}^N \bar c_k z^k$. Hence,
using the fact that $p(1/z_j) + \phi(z_j) = \mu$, we get
\begin{equation}
\label{bab2}
T_{\overline{\Phi}} f(z) = \sum_{j=1}^{N+1} \alpha_j\bigg(
\frac{p(z)}{1-\overline{z_j} z} +\frac{\overline{\phi(z_j)}}{1-\overline{z_j} z}\bigg) = 
\bar \mu f(z) +\sum_{j=1}^{N+1}\alpha_j 
\frac{\bar p(z) - \overline{p(1/z_j)}}{1-\overline{z_j} z}.
\end{equation}
The functions 
$$
\frac{\bar p(z) - \overline{p(1/z_j)}}{1-\overline{z_j} z} =
\frac{\bar p(z) - \bar p (1/\overline{z_j})}{1-\overline{z_j} z},
\qquad j=1,\dots, N+1, 
$$
are polynomials of degree $N-1$. Hence, there exist nontrivial
coefficients $\alpha_j$ such that the last sum in \eqref{bab2} is 
identically zero, and so 
$T_{\overline{\Phi}} f = \bar \mu f$.

In the case of a zero $z_j$ of multiplicity $m_j$, consider the linear 
combination of the functions $(1-\bar z_j z)^{-l}$, $1\le l\le m_j$. We 
omit the straightforward computations.
\end{proof}

Next we study the  spectrum $\sigma(T_\Phi)$, 
the point spectrum $\sigma_p(T_\Phi)$ and its eigenvectors.

Note that $T_{\bar z} = T_{1/z}$ is the backward shift operator $S^*$ 
on $H^2$, that is,
$$
T_{\bar z}f = \frac{f(z)-f(0)}{z} \qquad \text{and} \qquad
T_{\bar z^k}f = \frac{1}{z^k}\bigg( f(z) - \sum_{j=0}^{k-1} \frac{f^{(j)}(0)}{j!}z^j\bigg).
$$

In the proof of the next proposition we will need the basic results
on inner-outer (Nevanlinna) factorization of the functions
in the Hardy spaces (see, e.g., \cite[Chapter 2]{dur} or \cite[Chapter IV]{koo}).
  
\begin{proposition}
\label{sob}
Assume that $\Phi$ is $N$-valent in $\D$. Then 
$$
\sigma(T_\Phi) = \co\setminus \Phi(\D, N), \qquad 
\sigma_p(T_\Phi)\supset \co\setminus \overline{\Phi(\D)}.
$$
If $\lambda \in \co\setminus \overline{\Phi(\D)}$, then 
the corresponding eigenspace has dimension $N$ 
and the eigenvectors are given by
$$
f_\lambda(z)  = \frac{q(z)}{z^N \Phi (z) - \lambda z^N}, 
$$
where $q$ is an arbitrary polynomial of degree at most $N-1$.
\end{proposition}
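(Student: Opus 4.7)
The strategy is to reduce everything to properties of the auxiliary function
\[
G(z) := z^N\Phi(z) - \lambda z^N = z^N p(1/z) + z^N(\phi(z) - \lambda),
\]
which lies in $H^\infty$ because $z^N p(1/z)$ is a polynomial of degree at most $N-1$ (using $p(0)=0$). The key observation is the boundary factorisation $\Phi - \lambda = \bar z^N G$ on $\T$.

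First I would characterise the eigenspace at an arbitrary $\lambda$. The equation $T_\Phi f = \lambda f$ is equivalent to $(\Phi - \lambda) f \in \overline{zH^2}$; multiplying by $z^N$ and noting that $Gf \in H^2$ gives $Gf \in H^2 \cap z^N\overline{zH^2} = \mathcal P_{N-1}$, the polynomials of degree at most $N-1$. Hence $f = q/G$ for some $q \in \mathcal P_{N-1}$, and conversely any such $q/G$ in $H^2$ satisfies $(\Phi-\lambda)(q/G) = q/z^N \in \overline{zH^2}$ and is an eigenvector. When $\lambda \notin \overline{\Phi(\D)}$, the positivity of $\operatorname{dist}(\lambda, \Phi(\D))$ together with $G(0) = c_N \neq 0$ forces $\inf_\D |G| > 0$, so $1/G \in H^\infty$; then every $q \in \mathcal P_{N-1}$ yields an eigenvector $q/G \in H^\infty \subset H^2$, and $q \mapsto q/G$ is a linear bijection onto the eigenspace. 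This gives the explicit formula, dimension exactly $N$, and the inclusion $\sigma_p(T_\Phi) \supset \co \setminus \overline{\Phi(\D)}$.

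For the inclusion $\co \setminus \Phi(\D,N) \subset \sigma(T_\Phi)$ I would argue by three cases: points of $\co \setminus \overline{\Phi(\D)}$ lie in $\sigma_p$ by the above; boundary points of $\Phi(\D)$ are limits of such and $\sigma$ is closed; and if $\lambda \in \Phi(\D) \setminus \Phi(\D,N)$, then $G$ has $k < N$ zeros in $\D$, so requiring $q \in \mathcal P_{N-1}$ to vanish at those zeros imposes only $k$ linear conditions and leaves an $(N-k)$-dimensional family. On such $q$ the quotient $q/G$ is holomorphic in $\D$, and cancelling the finite Blaschke factor of $G$ together with inner--outer factorisation places it in $H^2$, producing a non-trivial eigenvector.

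The reverse inclusion $\sigma(T_\Phi) \subset \co \setminus \Phi(\D,N)$ is the hardest part: one must invert $T_{\Phi-\lambda}$ for every $\lambda \in \Phi(\D,N)$. Since $G \in H^\infty$, the identity $T_{\psi G} = T_\psi T_G$ (valid for $\psi \in L^\infty$ because $GH^2 \subset H^2$) applied with $\psi = \bar z^N$ gives $T_{\Phi-\lambda} = T_{\bar z^N G} = (S^*)^N T_G$. Kernel triviality is immediate from the first step: $G$ now has exactly $N$ zeros in $\D$, so any $q \in \mathcal P_{N-1}$ with $q/G$ holomorphic must vanish identically. For surjectivity, given $u \in H^2$, Hermite interpolation provides a unique $q \in \mathcal P_{N-1}$ making $z^N u + q$ vanish at each zero of $G$ with the correct multiplicity, and the candidate preimage is $f := (z^N u + q)/G$. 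The main obstacle is verifying $f \in H^2$: one first extracts the finite Blaschke product $B_N$ of the zeros of $G$, so that $(z^N u + q)/B_N \in H^2$ by a standard Blaschke-cancellation argument; the remaining division by $G/B_N \in H^\infty$, which is zero-free in $\D$, is handled via the inner--outer decomposition machinery invoked in the preliminaries.
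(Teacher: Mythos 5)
Your eigenspace characterisation is genuinely nicer than the paper's: rewriting $T_\Phi f=\lambda f$ as $(\Phi-\lambda)f\in\overline{zH^2}$ and then $Gf\in H^2\cap z^N\overline{zH^2}=\mathcal{P}_{N-1}$ proves that \emph{every} eigenvector has the form $q/G$, which the paper only asserts; the factorisation $T_{\Phi-\lambda}=(S^*)^N T_G$ and the resulting injectivity argument are also correct. The problems are in the two places where you must put $q/G$ or $(z^Nu+q)/G$ into $H^2$, and in the boundary case of the inclusion $\co\setminus\Phi(\D,N)\subset\sigma(T_\Phi)$.

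First, the claim that points of $\partial\Phi(\D)$ ``are limits of points of $\co\setminus\overline{\Phi(\D)}$'' is false in general. Take $\Phi(z)=z+1/z$: it is univalent in $\D\setminus\{0\}$, $\Phi(\D)=\co\setminus[-2,2]$, so $\overline{\Phi(\D)}=\co$ and $\co\setminus\overline{\Phi(\D)}=\emptyset$, while $\co\setminus\Phi(\D,1)=[-2,2]$. Here $T_\Phi$ is self-adjoint with purely continuous spectrum $[-2,2]$: these $\lambda$ are in $\sigma$ but are neither eigenvalues nor limits of the eigenvalues you construct. This shows that \emph{no} argument based on exhibiting eigenvectors or taking closures of $\sigma_p$ can cover $\partial\Phi(\D)$; you need the paper's contrapositive ``Claim'': if $\lambda$ were regular, the bound $\|f\|_2\le C\|g\|_2$ forces $(\Phi-\lambda)^{-1}\in L^\infty(\T)$ and rules out a singular inner factor of $\Psi=z^N\Phi-\lambda z^N$, and \emph{only under those derived hypotheses} does the eigenvector $Q/\Psi$ lie in $H^2$, giving the contradiction. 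The same issue infects your case $\lambda\in\Phi(\D)\setminus\Phi(\D,N)$: ``$G/B_N$ is zero-free in $\D$'' does not make $q/G\in H^2$ --- $1-z$ is zero-free and outer, yet $1/(1-z)\notin H^2$, and a nontrivial singular inner factor of $G$ is fatal. Second, in the surjectivity step for $\lambda\in\Phi(\D,N)$ you never establish the essential lower bound $|\Phi-\lambda|\ge\delta$ a.e.\ on $\T$ (which follows from $N$-valence plus a Rouch\'e argument and is stated explicitly in the paper); without it, even after cancelling the Blaschke factor, division by the outer part of $G$ can leave $H^2$. So the skeleton is right and parts of it improve on the paper, but the inclusion $\co\setminus\Phi(\D,N)\subset\sigma(T_\Phi)$ as you argue it is broken, and both $H^2$-membership claims need the boundary estimate and the no-singular-factor statement that the paper extracts from assumed regularity.
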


\begin{proof} First we prove 
the inclusion $\sigma(T_\Phi) \subset \co\setminus \Phi(\D, N)$.
Namely, we show that any $\lambda \in \Phi(\D, N)$ 
is a regular point for $T_\Phi$, i.e., the 
equation $T_\Phi f - \lambda f =g$ has the unique solution  $f\in H^2$
for any $g\in H^2$.

Let $p(z) = \sum_{k=1}^N c_k z^k$. Then the equation $T_\Phi f - \lambda f =g$
may be rewritten as
$$
\sum_{k=1}^N  \frac{c_k}{z^k} 
\bigg( f(z) - \sum_{j=0}^{k-1} \frac{f^{(j)}(0)}{j!}z^j\bigg) +\phi(z) 
f(z) - \lambda f(z) = g(z),
$$ 
or, equivalently, 
$$
f(z) \bigg(\sum_{k=1}^N c_k z^{N-k} +z^N \phi(z)- \lambda z^N\bigg) = 
z^N g(z) + \sum_{k=1}^N c_k \sum_{j=0}^{k-1}\frac{f^{(j)}(0)}{j!}z^{N-k+j}.
$$
If $\lambda \in \Phi(\D, N)$, then the expression in brackets 
(which equals $z^N \Phi (z) - \lambda z^N$) has exactly $N$ zeros
in $\D$ counting multiplicities, say, $z_1, z_2, \dots, z_N$.
Moreover, it is clear that $|\Phi (z) - \lambda| \ge \delta>0$
for some $\delta>0$ and almost every $z \in\mathbb{T}$.   
Consider the (unique) polynomial $q$ of degree $N-1$ 
such that $z_j^N g(z_j)+q(z_j)=0 $, $j=1, \dots N$
(with obvious modification for multiple zeros).
Then, for this choice of $q$, the function 
\begin{equation}
\label{lish}
f(z)  = \frac{z^N g(z) + q(z)}{z^N \Phi (z) - \lambda z^N}
\end{equation}
belongs to $H^2$. Note that 
we necessarily have 
\begin{equation}
\label{lish1}
q(z) = \sum_{k=1}^N c_k \sum_{j=0}^{k-1}\frac{f^{(j)}(0)}{j!}z^{N-k+j}
\end{equation}
(just compare the Taylor coefficients), and so $f$ is indeed the unique solution 
of the equation $T_\Phi f - \lambda f = g$.
Thus, we have shown that $\sigma(T_\Phi) \subset \co\setminus \Phi(\D, N)$.
\bigskip

For the proof of the converse inclusion $\co\setminus \Phi(\D, N) \subset \sigma(T_\Phi)$
we will need the following observation:
\medskip
\\
{\bf Claim.} {\it If $\lambda$ is a regular point for $T_\Phi$, then 
$(\Phi - \lambda)^{-1} \in L^\infty(\mathbb{T})$ and 
the Nevanlinna factorization of the 
function $\Psi(z) = z^N\Phi(z) - \lambda z^N \in H^\infty$ contains no nontrivial 
singular inner factor.}
\medskip
\\
{\it Proof of the Claim.} Assume 
that the equation $T_\Phi f - \lambda f = g$ has the unique solution
for any $g\in H^2$. Then $f$ is of the form \eqref{lish} where the polynomial $q$
is given by \eqref{lish1}. Note that for a function $\gamma$ on $\mathbb{T}$ 
we have the inclusion $\gamma H^2 = \{\gamma h: h\in H^2\} 
\subset L^2(\mathbb{T})$ if and only if 
$\gamma\in L^\infty(\mathbb{T})$. Since the function $f$ in \eqref{lish} 
is in $H^2$ for any $g\in H^2$, while 
$\|q\|_\infty \le C\|f\|_2 \le C_1\|g\|_2$ 
for some constants $C, C_1$ independent from $g$, 
we conclude that $(z^N \Phi (z) - \lambda z^N)^{-1} \in 
L^\infty(\mathbb{T})$.

If $\Psi$ has a nontrivial singular inner factor, then, taking,   
$g\equiv 1$, we obtain a function $f$ of the form $f = \frac{u_1B_1}{u_2B_2I}$,
where $u_1, u_2$ are outer functions, $B_1, B_2$ Blaschke products
and $I$ a nontrivial singular inner function. Hence, $f\notin H^2$, a contradiction.
The Claim is proved.
\bigskip

Now we return to the proof of the
inclusion $\co\setminus \Phi(\D, N) \subset \sigma(T_\Phi)$.
Let $\lambda \notin \Phi(\D, N)$. We will show that 
$\lambda \in \sigma(T_\Phi)$. 
From now on we assume that $(\Phi - \lambda)^{-1} \in L^\infty(\mathbb{T})$
and the Nevanlinna factorization of $\Psi$ contains no nontrivial 
singular inner factor (otherwise, we already know from the Claim that 
$\lambda \in \sigma(T_\Phi)$).

Assume first that $\lambda \notin \Phi(\D)$. Then 
$\Psi \ne 0$ in $\D$, $\Psi$ has no singular inner factor and so $\Psi$
is an outer $H^\infty$ function. Since $\Psi^{-1} \in L^\infty(\T)$,
we conclude that $\Psi^{-1} \in H^\infty(\D)$. 
Hence, the function
\begin{equation}
\label{bab1}
f_\lambda(z)  = \frac{Q(z)}{\Psi(z)} = \frac{Q(z)}{z^N \Phi (z) - \lambda z^N}
\end{equation}
is in $H^2$ and is an eigenvector of $T_\Phi$ for any choice of the polynomial
$Q$ of degree at most $N-1$. 

Finally, if $\lambda \in \Phi(\D) \setminus\Phi(\D, N)$,
then $\Psi$ has $m$ zeros  
$z_1, z_2, \dots, z_m$ in $\D$ counting multiplicities, where 
$m<N$ (recall that $\Phi$ is $N$-valent in $\mathbb{D}$). 
Therefore, for any polynomial $Q$ which vanishes at $z_j$, 
the function \eqref{bab1} will be an eigenfunction of $T_\Phi$.
Thus, $\co\setminus \Phi(\D, N) \subset \sigma(T_\Phi)$. 
\medskip

The inclusion $\co\setminus \overline{\Phi(\D)} \subset \sigma_p(T_\Phi)$ is easy.
If $\lambda \in \co\setminus \overline{\Phi(\D)}$, then, for some $\delta>0$, we have
$|\Psi(z)|\ge \delta$, $z\in \D$, whence $\Psi^{-1} \in H^\infty(\D)$,
and so any function $f$ of the form \eqref{bab1} is an eigenvector. 
\end{proof}

\begin{remark}
{\rm Note that we have shown in the proof of Proposition \ref{sob}
that $\sigma_p(T_\Phi)$ contains all points $\lambda\in \Phi(\D) \setminus\Phi(\D, N)$
such that $(\Phi - \lambda)^{-1} \in L^\infty(\mathbb{T})$ 
and $\Psi$ has no singular inner factor (this is the case, e.g., if 
there exist $r\in (0,1)$, $\delta>0$ such that $|\Psi(z)| \ge \delta$, $r<|z|<1$). }
\end{remark}
\bigskip

\section{Proofs of main results}

We start with the proof of necessity parts 
of Theorems \ref{main} and \ref{main1}.
\medskip
\\
{\it Proof of Statement 1 of Theorems \ref{main} and \ref{main1}.}  
By Proposition \ref{kai}, if $T_{\Phi}$  
is hypercyclic, then $\Phi$ is $N$-valent in $\D$. In particular, $\Phi$
is univalent in $\D$ when $N=1$. Property (a) is proved. 

Clearly, if $ \Dh \subset \Phi(\D, N)$, then for any $\zeta\in \tz$ for which 
the nontangential boundary value $\Phi(\zeta)$ exists, we have $|\Phi(\zeta)| \le 1$.
Indeed, otherwise there exist $z_1, \dots z_N \in \mathbb{D}$ such that 
$\Phi(z_j) = \Phi(\zeta)$ and the equation $\Phi(z) = w$ will have at least
$N+1$ solutions for some $w$ sufficiently close to $\Phi(\zeta)$.
Hence, $|\Phi|\le 1$ a.e. on $\tz$ and so $\|T_\Phi\|\le 1$, 
a  contradiction to hypercyclicity. 

Finally, if $T_\Phi$ is hypercyclic, then $\sigma(T_\Phi)\cap \tz \ne \emptyset$.
By Proposition \ref{sob}, $\sigma(T_\Phi) = \co\setminus \Phi(\D, N)$
and, in particular, $\sigma(T_\Phi) = \co\setminus\Phi(\D)$
when $N=1$. This completes the proof of (b).  
\qed
\bigskip

The following proposition plays a key role in 
the proof of sufficient conditions in Theorems \ref{main} and \ref{main1}. 

\begin{proposition}
\label{pr1}
1. Let $h\in A(\D)$ be injective in $\overline{\D}$ 
\textup(i.e. univalent up to the boundary\textup). Then the system
$\{h^k\}_{k\ge 0}$ is complete in $H^2$.
\medskip

2. Let $h\in A(\D)$ be $N$-valent in $\overline{\D}$ and, moreover,  
assume that for any $w\in h(\overline{\D})$ the equation $h(z) = w$ has exactly 
$N$ solutions in $\overline{\D}$. Then the system of functions 
$\{z^j h^k: k\ge 0, j=0, 1, \dots, N-1 \}$ is complete in $H^2$.
\end{proposition}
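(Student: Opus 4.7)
The plan is to invoke Mergelyan's approximation theorem on the compact set $K = h(\overline{\D})$, after a change of variables that reduces approximation in $H^2$ to polynomial approximation on $K$.

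For part 1, since $h$ is injective and continuous on $\overline{\D}$, it is a homeomorphism onto $K$, which is therefore a closed topological disk; in particular $\co\setminus K$ is connected. The inverse $h^{-1}$ is holomorphic on the interior $h(\D)$, since a univalent analytic function has non-vanishing derivative. For any polynomial $f$ (polynomials being dense in $H^2$), the composition $F = f \circ h^{-1}$ is continuous on $K$ and holomorphic in its interior, so Mergelyan's theorem yields polynomials $P_n$ with $P_n \to F$ uniformly on $K$. Then $P_n \circ h \to f$ uniformly on $\overline{\D}$, hence also in $H^2$, establishing completeness of $\{h^k\}_{k\ge 0}$.

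For part 2, I would try to decompose each $f \in A(\overline{\D})$ as
$$
f(z) = \sum_{j=0}^{N-1} z^j g_j(h(z)), \qquad z \in \overline{\D},
$$
with $g_j$ continuous on $K$ and holomorphic in its interior, and then approximate each $g_j$ on $K$ by polynomials $Q_{j,n}$ via Mergelyan; the sums $\sum_j z^j Q_{j,n}(h(z))$ would then converge uniformly on $\overline{\D}$, hence in $H^2$, to $f$. The construction of the $g_j$ is pointwise: for each $w \in K$, let $z_1(w), \dots, z_N(w)$ be the $N$ preimages of $w$ in $\overline{\D}$ (with multiplicity), and take $g_0(w), \dots, g_{N-1}(w)$ to be the coefficients of the unique polynomial of degree $<N$ that interpolates $f$ (in the Hermite sense at coincident nodes) at these points. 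Equivalently, $(g_j(w))_j$ solves the Vandermonde system $\sum_j z_i(w)^j g_j(w) = f(z_i(w))$, and by Cramer's rule each $g_j(w)$ is a symmetric rational expression in the branches $z_i(w)$ and the values $f(z_i(w))$, so it defines a single-valued holomorphic function on the set of regular values of $h$ in the interior of $K$.

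The main obstacle is to verify that $g_j$ extends continuously to all of $K$, and in particular across branch points where the Vandermonde determinant vanishes. At such critical values the numerator in Cramer's formula vanishes to at least the same order because $f(z_i) - f(z_{i'}) \to 0$ as two preimages merge; equivalently, the Hermite interpolating polynomial depends continuously on its nodes, forcing the $g_j$ to be bounded near the critical set and hence holomorphic across it by Riemann's removable singularity theorem. Continuity up to $\partial K$ follows similarly from continuity of $f$ on $\overline{\D}$ and of the branches $z_i$. To apply Mergelyan one also needs $\co\setminus K$ to be connected, which follows from the hypothesis that every $w\in K$ has exactly $N$ preimages in $\overline{\D}$: this rules out inward fjords of the boundary, so $K = \overline{h(\D)}$ and $\co\setminus K$ is the unbounded component of $\co\setminus h(\T)$.
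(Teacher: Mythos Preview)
Your proof is correct and follows essentially the same route as the paper: both parts reduce to Mergelyan's theorem on $K=h(\overline{\D})$, and for part~2 both use the interpolation decomposition $f(z)=\sum_{j=0}^{N-1} z^{j} g_j(h(z))$ obtained from the Vandermonde system, with the removable-singularity argument at branch values. The one technical difference is that the paper decomposes test functions $f\in C^{N}(\overline{\D})$ rather than $f\in A(\overline{\D})$; this extra smoothness makes continuity of the $g_j$ up to $\partial K$ easier to justify in the borderline case where two preimage branches $z_i(w)$ coalesce at a boundary point, since the Hermite limit of the Lagrange data then exists automatically.
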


\begin{proof}
1. Let $\Omega = h(\D)$, $\Gamma = \partial \Omega$, $g=h^{-1}: \Omega \to \D$.
Clearly, $g$ admits a continuation to a continuous function on $\overline \Omega = 
\Omega\cup \Gamma$. Since $\Gamma$ is a closed Jordan curve (without intersections), 
the complement $\co\setminus \overline{\Omega}$ is connected and so, by 
Mergelyan's theorem, any function $f$ in $H^\infty(\Omega) \cap 
C(\overline \Omega)$ may be uniformly approximated by analytic polynomials, 
$p_n(u) \to f(u)$ uniformly in $u\in \overline \Omega$. Hence,
$p_n(h(z)) \to f(h(z))$ uniformly in $z\in \overline{\D}$, whence any function 
from $H^\infty\cap C(\overline{\D})$ may be approximated by polynomials in $h$.
\medskip

2. It is not difficult to show that the hypothesis implies that
for any function $f$ which is sufficiently smooth up to the boundary
(say, $f\in C^{N}(\overline{\D})$) there exist functions 
$f_j\in H^\infty (\Omega) \cap C(\overline{\Omega})$, $j=0, 1, \dots, N-1$,
such that 
\begin{equation}
\label{bab5}
f(z) = f_0(h(z)) +z f_1(h(z)) +\dots+ z^{N-1} f_{N-1}(h(z)).
\end{equation}
Here, as above, $\Omega = h(\D)$.
Indeed, for a point $w$ with $N$ distinct preimages $z_1, \dots, z_N$
consider the system of linear equations $f(z_l) = \sum_{j=0}^{N-1}
z_l^j f_j(w)$, $l=1,2,\dots, N$, 
with the unknown $f_j(w)$. Since $z_l$ are locally analytic functions of $w$,
we conclude that $f_j$ are locally analytic at such points $w$; 
it is easy to show that the functions $f_j$ have removable singularities at $w$
in the case of multiple zeros, and so are analytic in the whole $\Omega$ and
continuous up to the boundary.

Now it remains to note that the "exact $N$-valence up to the boundary"$ $ condition 
implies that $\Omega$ is a Jordan domain, 
$\mathbb{C} \setminus \overline{\Omega}$ is connected
and, by Mergelyan's theorem, each function $f_j$ is a 
uniform limit of polynomials $p_{j,m}$, $m\to\infty$, in $\overline{\Omega}$. Hence, 
the sum $\sum_{j=0}^{N-1} z^j p_{j, m}(h(z))$ converges to $f$ uniformly
in $\overline{\D}$. Thus, any sufficiently 
smooth $f$ belongs to the uniform closure in $\overline{D}$ of the linear span
of $\{z^j h^k: k\ge 0, j=0, 1, \dots, N-1 \}$. Hence, this system 
is complete also in $H^2$.
\end{proof}

\begin{remark}
{\rm The problem of completeness of systems $\{h^k\}_{k\ge 0}$ in $H^2(\D)$
or (essentially) equivalent problem of density of polynomials in 
the Hardy space $H^2(\Omega)$, $\Omega = h(\D)$, is in general, a deep problem
for which no explicit answer exists (see \cite{sar, cau, b}).  
Clearly, univalence of $h$ in $\D$ is necessary. 
On the other hand, Caughran \cite{cau} showed that if the polynomials are 
dense in $H^2(\Omega)$ and $h\in A(\D)$, then $\Omega$ is a Jordan domain, and so $h$
is univalent in $\D$ up to the boundary. In the general case it is a result by
Bourdon \cite{b} that the density of polynomials in $H^2(\Omega)$   
implies that $h$  is univalent almost everywhere on~$\tz$.  }
\end{remark}                                                       
\medskip
\noindent
{\it Proof of Statement 2 of Theorems \ref{main} and \ref{main1}.} 
We first consider the case $N=1$, $p(z) =\gamma z$. 
Since, by Proposition~\ref{sob}, $\sigma_p(T_\Phi) \supset  \co\setminus \overline{\Phi(\D)}$,
condition $(b')$ implies that we have 
open sets $U_1\subset \D$ and $U_2\subset \Dh$ of eigenvalues. 
By the Godefroy--Shapiro criterion, 
it remains to show that the corresponding eigenvectors are complete in $H^2$.
Fix some $\lambda_0 \in U_1$ and let 
$$
h(z) = \frac{z}{\gamma - \lambda_0 z + z\phi(z)} = \frac{1}{\Phi(z)-\lambda_0}.
$$
By the conditions on $\Phi$ we have that
$h\in A(\D)$ and $h$ is injective in 
$\overline{\D}$. Now note that for $\lambda$ in a small neighborhood 
$\{|\lambda-\lambda_0|<\delta\}$ of $\lambda_0$
$$
f_\lambda(z) = \frac{1}{\gamma - \lambda z + z\phi(z)} = 
\sum_{k=0}^\infty \frac{(\lambda-\lambda_0)^k z^k}{(\gamma - \lambda_0 z 
+ z\phi(z))^{k+1}}.
$$
Thus, if $f\perp f_\lambda$, $|\lambda-\lambda_0|<\delta$, then
$$
f\perp (\gamma - \lambda_0 z + z\phi(z))^{-1} h^k, \qquad k\ge 0.
$$ 
By Statement 1 of Proposition \ref{pr1}, 
the system $\{h^k\}_{k\ge 0}$ is complete in $H^2$.
The additional factor $1/(\gamma - \lambda_0 z + z\phi(z))$
is an invertible element of $H^\infty$ and so the system 
$$
\{(\gamma - \lambda_0 z + z\phi(z))^{-1} h^k\}_{k\ge 0}
$$
is also complete. We conclude that the eigenvectors corresponding to 
$\lambda\in U_1$ are complete, the proof for $\lambda \in U_2$ is the same.

Now let $N>1$. As above, 
$(b')$ guarantees that we have 
open sets $U_1\subset \D$ and $U_2\subset \Dh$ 
such that $U_1,\, U_2 \subset \co\setminus \overline{\Phi(\mathbb{D})} = 
\co\setminus\overline{\Phi(\mathbb{D}, N)}$ and, thus, consist of eigenvalues.
Fix $\lambda_0 \in U_1$. In this case  we have, by Proposition \ref{sob},
$N$ eigenvectors corresponding to $\lambda_0$, 
$$
f_{\lambda_0, j} (z) = \frac{z^j}{z^N\Phi(z) -\lambda_0 z^N}, \qquad j=0,1, 
\dots, N-1.
$$
Using, as above, the Taylor expansion for $\lambda$ close to $\lambda_0$, we conclude 
that if $f$ is orthogonal to the eigenvectors corresponding to $\lambda$
in a small neighborhood of $\lambda_0$, then
\begin{equation}
\label{bab3}
f\perp \frac{z^j h^k(z)}{z^N\Phi(z) -\lambda_0 z^N}, \qquad k\ge 0,\ 0\le j\le N-1,
\end{equation}
where 
$$
h(z) = \frac{z^N}{z^N \Phi(z) - \lambda_0 z^N} = \frac{1}{\Phi(z) - \lambda_0}.
$$
By the assumptions on $\Phi$, $h$ is $N$-valent and
for any $w\in h(\overline{\D})$ the equation $h(z) = w$ has exactly $N$ solutions in
$\overline{\D}$ counting multiplicities. Hence, by Statement 2 of 
Proposition \ref{pr1},  
the system $\{z^j h^k: k\ge 0, j=0, 1, \dots, N-1 \}$ is complete in $H^2$.
We conclude that any function $f$ satisfying \eqref{bab3} is zero.
\qed

\bigskip

\section{Shkarin's characterization of tridiagonal Toeplitz operators} 

In \cite{sh} Shkarin characterized hypercyclic Toeplitz operators 
with symbols of the form $\Phi(z) = \frac{a}{z} +b +cz$:

\begin{proposition}
\cite[Proposition 5.10]{sh}
The Toeplitz operator $T_\Phi$ with 
$\Phi(z) = \frac{a}{z} +b +cz$ is hypercyclic if and only if

$(a)$ $|a| > |c|$;

$(b)$ $\D \cap (\co \setminus \Phi(\D)) \ne \emptyset$ and 
$\Dh \cap (\co \setminus \Phi(\D)) \ne \emptyset$.
\end{proposition}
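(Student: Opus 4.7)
The plan is to view the proposition as the special case of Theorem~\ref{main} in which $\gamma = a$ and $\phi(z) = b + cz$ (which lies in $A(\overline{\D})$), and to close two small gaps between Shkarin's hypotheses and those of Theorem~\ref{main}.

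For sufficiency, one rearranges the equation $\Phi(z_1) = \Phi(z_2)$ for distinct $z_1, z_2 \in \overline{\D}\setminus\{0\}$ into $z_1 z_2 = a/c$ (the case $c = 0$ being trivial); since $|z_1 z_2| \le 1$, this has no solution in $\overline{\D}\setminus\{0\}$ precisely when $|a| > |c|$. So hypothesis (a) is equivalent to the univalence hypothesis (a') of Theorem~\ref{main}(2), and (b) is literally (b'). Theorem~\ref{main}(2) then delivers hypercyclicity.

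For necessity, Theorem~\ref{main}(1) gives univalence of $\Phi$ on $\D\setminus\{0\}$, which by the same rearrangement forces only $|a|\ge|c|$, together with $\sigma(T_\Phi)\cap\overline{\D}\ne\emptyset$ and $\sigma(T_\Phi)\cap\Dh\ne\emptyset$. Two refinements remain. To promote the inequality to strict, I would write $T_\Phi = aS^* + bI + cS$ with $S = T_z$ the forward shift and use $S^*S = I$ together with $SS^* = I - P_{e_0}$ (the rank-one projection onto the constants) to compute the cross-commutators; all other pairs commute, and one finds
$$
[T_\Phi, T_\Phi^*] = (|a|^2 - |c|^2)\, P_{e_0},
$$
so $T_\Phi$ is normal exactly when $|a| = |c|$. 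Since normal operators on a separable Hilbert space are never hypercyclic (see, e.g., \cite{bm, gp}), this yields $|a|>|c|$. To promote the spectral condition to $\sigma(T_\Phi)\cap \D\ne\emptyset$, suppose instead that $\sigma(T_\Phi)\subset\overline{\Dh}$: then the approximate point spectrum also satisfies $\sigma_{\rm ap}(T_\Phi)\subset\overline{\Dh}$, so $\inf_{\|f\|=1}\|T_\Phi f\| = \mathrm{dist}(0,\sigma_{\rm ap}(T_\Phi))\ge 1$, whence $\|T_\Phi^n f\|\ge\|f\|$ for every nonzero $f$ and every $n$, ruling out any orbit that accumulates at $0$.

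I expect the main obstacle to be the \emph{strict} inequality $|a|>|c|$: univalence on $\D\setminus\{0\}$ alone gives only $|a|\ge|c|$, so one must exploit a structural feature of $T_\Phi$, and the commutator identity above, combined with the Hilbert-space fact that no normal operator is hypercyclic, appears to be the cleanest route.
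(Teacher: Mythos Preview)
Your sufficiency argument and the normality step for the strict inequality $|a|>|c|$ are correct and match the paper's approach (the paper simply asserts that $T_\Phi$ is normal when $|a|=|c|$; your commutator identity $[T_\Phi,T_\Phi^*]=(|a|^2-|c|^2)P_{e_0}$ makes this explicit).

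The genuine gap is in your derivation of $\D\cap\sigma(T_\Phi)\ne\emptyset$. The asserted equality
\[
\inf_{\|f\|=1}\|T_\Phi f\| \;=\; \mathrm{dist}\bigl(0,\sigma_{\mathrm{ap}}(T_\Phi)\bigr)
\]
is false for general bounded operators; only the inequality $\le$ holds (from $\|(T-\lambda)f\|\ge\|Tf\|-|\lambda|\,\|f\|$). For a $2\times 2$ Jordan block with eigenvalue $1$ and large superdiagonal entry one has $\mathrm{dist}(0,\sigma_{\mathrm{ap}})=1$ while $\inf_{\|f\|=1}\|Tf\|=1/\|T^{-1}\|$ is arbitrarily small. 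More to the point, the paper itself notes (Question~1) that there \emph{exist} hypercyclic operators $T$ with $\sigma(T)\cap\D=\emptyset$, so no argument using only the location of the spectrum can force $\|T_\Phi f\|\ge\|f\|$.

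The paper closes this gap by exploiting the special geometry of the tridiagonal symbol: here $\sigma(T_\Phi)=\co\setminus\Phi(\D)$ is a closed solid ellipse, hence \emph{convex}. If it were disjoint from $\D$, a separating line tangent to $\T$ would give $\theta\in\R$ with $\rea\bigl(e^{i\theta}\Phi(\zeta)\bigr)\ge 1$ for all $\zeta\in\T$, and then
\[
\rea\bigl(e^{i\theta}\langle T_\Phi f,f\rangle\bigr)=\int_{\T}\rea\bigl(e^{i\theta}\Phi\bigr)\,|f|^2\,dm\ \ge\ \|f\|_2^2,
\]
so $\|T_\Phi f\|\ge\|f\|$ and $T_\Phi$ is an expansion, contradicting hypercyclicity. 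This convexity/separating-hyperplane step is what your argument is missing, and it is genuinely special to the form $\Phi(z)=\frac{a}{z}+b+cz$.
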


In fact, in \cite{sh} condition $(b)$ is replaced by $\min_{z\in\tz} |\Phi(z)| <1 <
\max_{z\in\tz} |\Phi(z)|$, but this condition is obviously incorrect. If we take
$a=2$, $b=c=0$, then $T_\Phi = 2S^*$ is hypercyclic,
but the estimate $\min_{z\in\tz} |\Phi(z)| <1$ 
does not hold. It is however clear from the proof that the author means the 
correct condition $(b)$. 

Let us show how to deduce this result from our Theorem \ref{main}. It is clear
that $\Phi$ is univalent in $\D$ if and only if $|a|\ge |c|$
and $\Phi$ is univalent in $\overline{\D}$ if and only if $|a| > |c|$. Hence, 
sufficiency of $(a)$ and $(b)$ follows immediately from Statement 2 of Theorem \ref{main}.

Let us show the necessity of $(a)$ and $(b)$. 
Univalence of $\Phi$ implies that $|a|\ge |c|$. To show the strict inequality
we need to apply the argument from \cite{sh}: if $|a| = |c|$, then $T_\Phi$ 
is a normal operator, and hence is not hypercyclic. In general this argument is not applicable.
On the other hand, in \cite{sh} the case $|a|< |c|$ is excluded 
by appealing to the theory of hyponormal operators. It seems that this 
kind of argument can not be used for more general 
operators of the form $T_{\gamma \bar z+\phi(z)}$.

The property $\Dh \cap (\co \setminus \Phi(\D)) \ne \emptyset$ is obvious.
To show that $\D \cap (\co \setminus \Phi(\D)) \ne \emptyset$ one has again
to use an {\it ad hoc} argument from \cite{sh} which                          
uses the very special form of the symbol. Assume, on the contrary, that 
$\sigma(T_\Phi) = \co \setminus \Phi(\D) \subset \{z: |z|\ge 1\}$. Note that in our case
$\sigma(T_\Phi)$ is a convex set (some ellipse) and so it can be separated from the 
unit disc. Thus, there exists $\theta\in \mathbb{R}$ such that 
$\rea(e^{i\theta}\Phi(z)) \ge 1$, $z\in \tz$. Hence, 
$$
\rea (e^{i\theta} T_\Phi f, f)  = \int_{\tz} \rea (e^{i\theta}\Phi) |f|^2 dm 
\ge \|f\|_2^2,
$$ 
and so $T_\Phi$ is an expansion. However, in general, 
$\co \setminus \Phi(\D)$ need not be convex.
\bigskip

\section{Some open questions}

We conclude this note with several open questions.
\medskip
\\
{\bf Question 1.} Let $\Phi = \frac{\gamma}{z} +\phi(z)$ and assume that 
$T_\Phi$ is hypercyclic. Does it follow that 
$$
\D \cap \sigma(T_\Phi) = \D \cap \big( \co \setminus \Phi(\D)\big) \ne \emptyset?
$$
It is true in the case of Toeplitz operators $T_{\overline{\psi}}$ 
with antianalytic symbols 
since if $\sigma(T_{\overline{\psi}}) 
\cap \D =\emptyset$, then $|\psi|>1$ in $\D$ and so 
its inverse $T_{1/\overline{\psi}}$ is a contraction, a contradiction. 
In the case $\Phi(z) = \frac{a}{z} +b +cz$ another argument was suggested by
Shkarin (see Section 4). However, these methods do not seem to apply in general.

Let us mention on the other hand that there are no general obstacles for 
a hypercyclic operator $T$ to satisfy $\sigma(T) \cap\D = \emptyset$ and
the intersection $\sigma(T) \cap\T$ may be a one-point set. Answering a question
of the first author, Sophie Grivaux constructed an example of a hypercyclic operator $T$
such that $\sigma(T)  = \overline{B(2, 1)}$ and $\sigma_p(T) = B(2,1)$ 
(by $B(z_0, r)$ we denote the disc of radius $r$ centered at $z_0$).
\medskip
\\
{\bf Question 2.} Is the univalence of $\Phi$ up to the boundary necessary
in the Statement 2 of Theorem \ref{main}, assuming $\phi\in A(\D)$? Apparently,  
it is necessary for the completeness of the functions of the form $\{h^k\}_{k\ge 0}$
for any individual function $h=\frac{1}{\Phi-\lambda_0}$. However, it seems that 
it is not necessary for completeness of eigenvectors with small or large eigenvalues. 
Namely, the following is true: assume that $\Phi$ is univalent in $\D$ and 
$\co\setminus \overline{\Phi(\D)}$ consists of finite number of connected components 
$U_j$, $1\le j\le m$. If any component $U_j$ intersects $\D$ and $\Dh$, then $T_\Phi$
is hypercyclic. To what extent are these conditions necessary?
\medskip
\\
{\bf Question 3.} What are sufficient conditions of hypercyclicity 
in the case when the valence of $\Phi$ changes inside $\D$? One can show that
the representation  \eqref{bab5} need not be true anymore.
Therefore, it is not clear, which approximation problem corresponds to 
an application of the Godefroy--Shapiro criterion in this case.
\bigskip
\\
{\bf Acknowledgements.}
The authors are grateful to Evgeny Abakumov and  
Sophie Grivaux for useful discussions and to 
the referee for several helpful remarks.

\end{document}